\newtheorem{thm}{Theorem}[section]
\newtheorem{lem}[thm]{Lemma}
\theoremstyle{definition}
\newtheorem{defn}[thm]{Definition}
\newtheorem{prop}[thm]{Proposition}
\newtheorem{cor}[thm]{Corollary}
\newtheorem{exa}[thm]{Example}
\newtheorem{qst}[thm]{Question}
\newtheorem{obs}[thm]{Observation}
\DeclareMathOperator\sym{Sym}
\DeclareMathOperator\gl{GL}
\DeclareMathOperator{\pg}{PG}
\DeclareMathOperator{\ag}{AG}
\newcommand\GL[1]{\operatorname{GL}_2(\mathbb{F}_{#1})}
\newcommand\SL[1]{\operatorname{SL}_2(\mathbb{F}_{#1})}
\letcs\replicate{prg_replicate:nn}
\begin{document}	

\title[]{No Hilton-Milner type results for linear groups of degree two}

\author[Roghayeh Maleki]{Roghayeh Maleki}
\address{Department of Mathematics and Statistics, University of Regina,
	Regina, Saskatchewan S4S 0A2, Canada}\email{rmaleki@uregina.ca}

\author[A. Sarobidy Razafimahatratra]{Andriaherimanana Sarobidy Razafimahatratra}
\address{Department of Mathematics and Statistics, University of Regina,
  Regina, Saskatchewan S4S 0A2, Canada}\email{sarobidy@phystech.edu}

\begin{abstract} 
	
	A set of permutations $\mathcal{F}$ of a finite transitive permutation group $G\leq \sym(\Omega)$ is \emph{intersecting} if any pair of elements of $\mathcal{F}$ agree on an element of $\Omega$. We say that $G$ has the \emph{EKR property} if an intersecting set of $G$ has size at most the order of a point stabilizer.  Moreover, $G$ has the \emph{strict-EKR} property whenever $G$ has the EKR property and any intersecting set of maximum size is a coset of a point stabilizer of $G$. 
	
	It is known that the permutation group $\GL{q}$ acting on $\Omega_q := \mathbb{F}_q^2\setminus\{0\}$ has the EKR property, but does not have the strict-EKR property since the stabilizer of a hyperplane is a maximum intersecting set. In this paper, it is proved that the Hilton-Milner type result does not hold for $\GL{q}$ acting on $\Omega_q$. Precisely, it is shown that a maximal intersecting set of $\GL{q}$ is of maximum size. As a result, we prove the Complete Erd\H{o}s-Ko-Rado theorem for $\GL{q}$.

\end{abstract}

\subjclass[2010]{Primary 05C35; Secondary 05C69, 20B05}

\keywords{Linear groups, Erd\H{o}s-Ko-Rado type theorems, Hilton-Milner type theorems}

\date{\today}

\maketitle

\section{Introduction}

	The Erd\H{o}s-Ko-Rado (EKR) theorem \cite{erdos1961intersection} is a classical result in extremal combinatorics which asserts that if $n\geq 2k$ and $\mathcal{F}$ is a collection of $k$-subsets of $[n]:= \{1,2,\ldots,n\}$ such that $A\cap B \neq \varnothing$, for all $A,B \in \mathcal{F}$, then $|\mathcal{F}|\leq \binom{n-1}{k-1}$. Moreover, if $n\geq 2k+1$ and $|\mathcal{F}| = \binom{n-1}{k-1}$, then there exists $a\in [n]$ such that $\mathcal{F} = \left\{ A \subset [n] \mid |A|=k, a\in A \right\}$. We call these maximum intersecting sets \emph{canonical}.
	In 1967, Hilton and Milner \cite{hilton1967some} characterized the largest non-canonical intersecting sets of $k$-subsets of $[n]$. These intersecting sets are maximal (i.e., adding another element to it results in a non-intersecting set), without being maximum. Such intersecting sets are of size at most
	\begin{align}
		1 + \binom{n-1}{k-1} - \binom{n-k-1}{k-1}.\label{eq:HM}
	\end{align}
	
	Let $\sym(n)$ be the symmetric group on $[n]$. We say that two permutations $\sigma$ and $\tau$ are \emph{intersecting} if there exists $i\in [n]$ such that $\sigma(i) = \tau(i)$. Deza and Frankl proved an EKR-type result for $\sym(n)$ in \cite{Frankl1977maximum}. They proved that if $\mathcal{F} \subset \sym(n)$ is such that any two permutations of $\mathcal{F}$ are intersecting, then $|\mathcal{F}|\leq (n-1)!$. Deza and Frankl also conjectured that the sets of permutations attaining this upper bound are cosets of a point stabilizer of the permutation group $\sym(n)$. This conjecture of Deza and Frank was proved by Cameron and Ku \cite{cameron2003intersecting}, independently, by Larose and Malvenuto \cite{larose2004stable}. Another proof of this conjecture also appeared later in \cite{godsil2009new} using the Hoffman bound. The \emph{canonical intersecting} sets for the permutations of $\sym(n)$ are the cosets of a point stabilizer. That is, sets of the form $V_{i,j} = \left\{ \sigma \in \sym(n) \mid \sigma(i) = j \right\}$.
	
	In \cite{cameron2003intersecting}, Cameron and Ku conjectured a Hilton-Milner type result for the symmetric group. This conjecture came to be known as the \emph{Cameron-Ku conjecture} and it was proved by Ellis in \cite{ellis2012proof}. 
	
	Let $G\leq \sym(\Omega)$ be a finite transitive group. Since the elements of $G$ are permutations of $\Omega$, we say that $\mathcal{F} \subset G$ is intersecting if any two elements agree on an element of $\Omega$. We say that $G$ has the \emph{EKR property} if an intersecting set of $G$ has size at most the order of a point stabilizer.  Moreover, $G$ has the \emph{strict-EKR} property if it has the EKR property and any intersecting set of maximum size is a coset of a point stabilizer of $G$. The \emph{canonical intersecting sets} for the permutation group $G$ are the cosets of a point stabilizer, i.e., sets of the form $V_{\omega,\omega^\prime} = \{ g\in G \mid \omega^\prime = \omega^g \}$, where $\omega,\omega^\prime \in \Omega_q$.
	
	For both the $k$-subsets of $[n]$ and permutations of the symmetric group, the canonical intersecting sets are the only intersecting sets of maximum sizes.	We show that for some transitive permutation groups, if there are maximum intersecting sets other than the canonical ones, then a Hilton-Milner type result is impossible. This paper is concerned with a Hilton-Milner type result for the permutation group $\GL{q}$ acting naturally by left multiplication on $\Omega_q := \mathbb{F}_q^2 \setminus \{0\}$, where $q$ is a prime power. 
	
	Recall that the \emph{Singer cycles} of $\GL{q}$ are matrices of order $q^2-1$. If $A \in \GL{q}$ is a Singer cycle and $H = \langle A\rangle$, then $H$ is a regular subgroup of $\GL{q}$ acting on $\Omega_q$. Using the Clique-Coclique bound (see \cite[Theorem~2.1.1]{godsil2016erdos}), we can deduce that an intersecting set of $G$ has size at most $\frac{|\GL{q}|}{|H|} = q(q-1)$, which is the order of a point stabilizer of $\GL{q}$. In other words, $\GL{q}$ has the EKR property.
	
	Consider the affine plane $\ag(2,q)$, where $q$ is a prime power. Recall that $\ag(2,q)$ is the incidence structure whose points consist of $\mathbb{F}_q^2$ and whose lines consist of the sets $L_{u,v} = \left\{ tu+v \mid t\in \mathbb{F}_q \right\}$, with $v\in \mathbb{F}_q$ and $u\in \mathbb{F}_q^2 \setminus \{0\}$. It is easy to see that $\GL{q}$ acts (intransitively) on both the points and the lines of $\ag(2,q)$.  
	
	First, note that $\GL{q}$ has two orbits in its action on the lines of $\ag(2,q)$. The first orbit $\mathcal{O}_1$ is the $1$-dimensional subspaces of $\mathbb{F}_q^2$ (i.e., points of $\pg(1,q)$); so $|\mathcal{O}_1|= q+1$. The second orbit $\mathcal{O}_2$ is the set of all lines of $\ag(2,q)$ that do not contain the vector $0$. These lines are obtained by taking all the parallel lines (i.e., cosets) of each line in $\mathcal{O}_1$. By Lagrange's theorem  on the group $\mathbb{F}_q^2$, there are $q$ cosets for each line in $\mathcal{O}_1$. Hence, there are $q-1$ parallel lines to each element of $\mathcal{O}_1$. In other words, $|\mathcal{O}_2| = (q-1)(q+1)$.
	
	Let $\ell$ be a line of $\ag(2,q)$ which is a $1$-dimensional subspace spanned by $ [1,0]^T$. Let $\Delta = \left\{\ell,b_1+\ell,b_2+\ell,\ldots, b_{q-1} + \ell\right\}$ be the set of all parallel lines to $\ell$ and $K$ be the pointwise stabilizer of $\Delta$.
	Consider the subgroup of $\GL{q}$ given by 
	\begin{align*}
		M = \left\{ \begin{bmatrix}
			k & a\\
			0 & 1
		\end{bmatrix}
		: k\in \mathbb{F}_q^*,\ a\in \mathbb{F}_q \right\}.
	\end{align*}
	It is easy to see that $M \leq K$ because for any $A = \begin{bmatrix}
	k & a\\
	0 & 1
	\end{bmatrix}$, $\ell$ is an eigenspace corresponding to the eigenvalue $k$, thus $A\ell = \ell$, and $(A-I)b_i \in \ell$, for any $i\in \{1,2,3,\ldots,q-1\}$. In fact, one can prove that $M = K$.
	 Let $N$ be the setwise stabilizer of $\ell^\prime = b_1+\ell \in \mathcal{O}_2$ in $\GL{q}$. Since the action of $\GL{q}$ on the orbits of lines that do not contain the vector $0$ is transitive, by the orbit-stabilizer lemma, we deduce that 
	\begin{align*}
		|N| = \frac{|\GL{q}|}{|\mathcal{O}_2|} = \frac{(q^2-1)(q^2-q)}{q^2-1} = q(q-1).
	\end{align*}
	Since $K\leq N$, we conclude that $q(q-1) = |M| \leq |K| \leq |N| = q(q-1)$, and $M = N = K$.
	
	It is easy to see that $M$ is intersecting (in the action of $\GL{q}$ on the non-zero vectors) because if $k,k^\prime \in \mathbb{F}_q^*$ and $a,a^\prime \in \mathbb{F}_q$, then 
	\begin{align*}
		\begin{bmatrix}
			k & a\\
			0 & 1
		\end{bmatrix}^{-1}
		\begin{bmatrix}
			k^\prime & a^\prime \\
			0 & 1
		\end{bmatrix} = 
		\begin{bmatrix}
			k^{-1} & -k^{-1}a\\
			0 & 1
		\end{bmatrix}
		\begin{bmatrix}
			k^\prime & a^\prime \\
			0 & 1
		\end{bmatrix} =
		\begin{bmatrix}
			k^{-1}k^\prime & k^{-1}(a^{\prime}-a ) \\
			0 & 1
		\end{bmatrix}.
	\end{align*}
	As this matrix has an eigenvalue equal to $1$, it is not a derangement of $\GL{q}$ acting on $\Omega_q$. In other words, $M$ is an intersecting subgroup of maximum size in $\GL{q}$. 
	
	Since the stabilizers of lines from $\mathcal{O}_2$ are conjugate, it follows that the stabilizer of any line in $\mathcal{O}_2$ is intersecting of maximum size. Therefore, the stabilizer of a line of $\ag(2,q)$ that does not contain $0$ is another maximum intersecting set of $\GL{q}$. Hence, $\GL{q}$ acting on $\Omega_q$ does not have the strict-EKR property. 
	
	The following theorem was recently proved by Ahanjideh \cite{ahanjideh2021largest}. It was also partially obtained by Meagher and the second author in \cite{meagher2021erd}.
	
	\begin{thm}
		Let $q$ be a prime power. If $\mathcal{F} \subset \GL{q}$ is intersecting, then $|\mathcal{F}|\leq q(q-1)$. Moreover, equality holds if $\mathcal{F}$ is a coset of a point stabilizer or a coset of the stabilizer of a line in $\mathcal{O}_2$.\label{thm:EKR-GL}
	\end{thm}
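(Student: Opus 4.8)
\section*{A proof proposal for Theorem~\ref{thm:EKR-GL}}

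The plan is to handle the bound $|\mathcal{F}|\le q(q-1)$ and the two families of extremal sets separately; as the introduction makes clear, both ingredients are already on the table, so the task is mainly to assemble them cleanly.

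For the inequality I would argue entirely in the \emph{derangement graph} $\Gamma$ of $\GL{q}$ acting on $\Omega_q$: its vertices are the elements of $\GL{q}$, and $g$ is adjacent to $h$ exactly when $gh^{-1}$ is a derangement, i.e.\ has no eigenvalue equal to $1$. Since being a derangement depends only on the conjugacy class, the connection set is a union of conjugacy classes, so $\Gamma$ is a normal Cayley graph, hence vertex-transitive, and an intersecting set of $\GL{q}$ is precisely a coclique of $\Gamma$. Taking a Singer cycle $A\in\GL{q}$ and putting $H=\langle A\rangle$, the subgroup $H$ is regular on $\Omega_q$ of order $q^2-1$; as every non-identity element of a regular group is fixed-point-free, any two distinct elements $A^i,A^j$ satisfy $A^i(A^j)^{-1}=A^{i-j}\neq I$ being a derangement, so the $q^2-1$ elements of $H$ form a clique. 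The Clique--Coclique bound~\cite[Theorem~2.1.1]{godsil2016erdos} then yields $|\mathcal{F}|\le |\GL{q}|/|H| = (q^2-1)(q^2-q)/(q^2-1) = q(q-1)$; together with the existence of a coclique of this size this also shows the clique and coclique numbers of $\Gamma$ are exactly $q^2-1$ and $q(q-1)$. I would emphasize that the Clique--Coclique bound is genuinely what is needed, rather than the ratio (Hoffman) bound: already for $q=3$ the graph $\Gamma$ is $27$-regular on $48$ vertices with least eigenvalue $-5$ (coming from the Steinberg character), so the ratio bound gives only $|\mathcal{F}|\le 48/(1+27/5)=7.5$.

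For the extremal sets, the two families are exactly the ones exhibited in the introduction. A left coset $xV_{\omega,\omega'}$ of a point stabilizer is intersecting: for $xs,xt$ in it with $s,t$ fixing $\omega$, the element $(xs)(xt)^{-1}=x(st^{-1})x^{-1}$ fixes $\omega^{x^{-1}}$, hence is not a derangement; and $|xV_{\omega,\omega'}|=|G_\omega|=q(q-1)$. For the second family, the introduction already establishes that the subgroup $M$ coincides with the stabilizer of a line $\ell'=b_1+\ell\in\mathcal{O}_2$, that $|M|=q(q-1)$, and that $M$ is intersecting (the displayed product of any two elements of $M$ has $1$ as an eigenvalue); since the derangement property is conjugation-invariant, every left coset of $M$, and of each of its conjugates — that is, of the stabilizer of any line in $\mathcal{O}_2$ — is intersecting of size $q(q-1)$ as well. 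This gives the ``moreover'' clause.

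There is no real obstacle in the statement as phrased. The genuinely difficult point, which is what the Hilton--Milner-type analysis later in this paper rests on, is the sharpening that these are the \emph{only} intersecting sets of size $q(q-1)$: because the ratio bound is not tight here, one cannot simply read off the extremal cocliques as $\{0,1\}$-vectors lying in a least-eigenvalue eigenspace, and one is instead forced to combine the character theory of $\GL{q}$ (the principal-series and cuspidal characters) with the subgroup structure, which is the route taken in~\cite{ahanjideh2021largest}.
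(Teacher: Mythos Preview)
Your proposal is correct and follows essentially the same line as the paper: the upper bound via the Clique--Coclique inequality applied to the Singer subgroup, and the two extremal families via the computations already displayed in the introduction (the coset argument for point stabilizers and the explicit verification that $M$ is intersecting). The additional remarks you make about the derangement-graph framing and the failure of the ratio bound for $q=3$ are extras not present in the paper's treatment, but they do not alter the argument.
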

	
	In \cite{ahanjideh2021largest}, it was claimed that if $\mathcal{F} \subset \GL{q}$ is an intersecting set which is not contained in a coset of a point stabilizer nor a coset of a stabilizer of a line in $\mathcal{O}_2$, then $|\mathcal{F}| \leq (q-1)(q-2)+1$ and this bound is sharp. Unfortunately, we were able to find counterexamples to this claim using \verb|Sagemath| \cite{sagemath}. Note that if such intersecting sets as claimed in \cite{ahanjideh2021largest} exist, then a maximal intersecting set with the aforementioned property will also exist. However, all maximal intersecting sets of $\GL{q}$, for $q \in \{3,4,5\}$, are all of maximum size; i.e., $q(q-1)$. Therefore, they are one of the two families of maximum intersecting sets described in Theorem~\ref{thm:EKR-GL}. The main result of this paper addresses this issue. 
	\begin{thm}
		Let $q$ be a prime power. If $\mathcal{F} \subset \GL{q}$ is intersecting, then $\mathcal{F}$ is contained in a coset of a point stabilizer of $\GL{q}$ or in a stabilizer of a line of $\mathcal{O}_2$.
		\label{thm:main}
	\end{thm}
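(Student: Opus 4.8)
The plan is to first reduce, by replacing $\mathcal{F}$ with $g^{-1}\mathcal{F}$ for some $g\in\mathcal{F}$ — which preserves being intersecting and carries cosets of point stabilizers, resp.\ of stabilizers of lines of $\mathcal{O}_2$, to cosets of the same type — to the case $I\in\mathcal{F}$ (the case $|\mathcal{F}|\le 1$ being trivial). Then, since $I^{-1}A=A$ is not a derangement for $A\in\mathcal{F}$, every element of $\mathcal{F}$ has eigenvalue $1$, and a non-identity such element is either a transvection or is conjugate to $\operatorname{diag}(1,\lambda)$ with $\lambda\in\mathbb{F}_q^*\setminus\{1\}$. The two families of maximal intersecting sets containing $I$ — the point stabilizers $G_v=\{A:Av=v\}$ and the conjugates of the subgroup $M$ from the introduction (i.e.\ the stabilizers of lines of $\mathcal{O}_2$) — are all stabilizers of a line of $\mathbb{F}_q^2$, so the heart of the argument will be: $(\ast)$ show that $\mathcal{F}$ lies in a common Borel subgroup, i.e.\ that its elements share an invariant line $\langle u\rangle$; and then $(\ast\ast)$ inside that Borel, pin $\mathcal{F}$ down to $G_v$ or to a conjugate of $M$.

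For $(\ast)$ I would argue as follows. If $\mathcal{F}$ contains a transvection $A$, conjugate so that $A=\left[\begin{smallmatrix}1&1\\0&1\end{smallmatrix}\right]$; for any $B=\left[\begin{smallmatrix}a&b\\c&d\end{smallmatrix}\right]\in\mathcal{F}$ the two conditions ``$B$ has eigenvalue $1$'' and ``$A^{-1}B$ has eigenvalue $1$'' both read $\tr(X)=\det(X)+1$ for the relevant $X$, and subtracting them forces $c=0$, so $\langle e_1\rangle$ is invariant under every element of $\mathcal{F}$. If instead $\mathcal{F}$ contains no transvection, pick $A\in\mathcal{F}\setminus\{I\}$ and conjugate so that $A=\operatorname{diag}(1,\lambda)$; the same two conditions now force $a=1$ and $bc=0$, so each $B\in\mathcal{F}$ either fixes $e_1$ or has $\langle e_2\rangle$ invariant. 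To upgrade this to a single common line I would use the identity $\det\!\big(B^{-1}C-I\big)=bc\,{d'}^{-1}$, valid for $B=\left[\begin{smallmatrix}1&b\\0&d'\end{smallmatrix}\right]$, $C=\left[\begin{smallmatrix}1&0\\c&d\end{smallmatrix}\right]$: it shows that if some $C\in\mathcal{F}$ has $\langle e_2\rangle$ but not $\langle e_1\rangle$ invariant (so $c\neq 0$), then every $B\in\mathcal{F}$ fixing $e_1$ must have $b=0$, hence be diagonal and have $\langle e_2\rangle$ invariant too. Either way all of $\mathcal{F}$ shares a common invariant line $\langle u\rangle$.

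For $(\ast\ast)$ I would choose a basis with $u=e_1$, so that every element of $\mathcal{F}$ is upper triangular with $(1,1)$-entry or $(2,2)$-entry equal to $1$, and write $\mathcal{F}=\mathcal{F}_1\cup\mathcal{F}_2$ with $\mathcal{F}_1=\mathcal{F}\cap G_{e_1}$ (those with $(1,1)$-entry $1$) and $\mathcal{F}_2=\mathcal{F}\cap M$ (those with $(2,2)$-entry $1$). If both $\mathcal{F}_1\setminus\mathcal{F}_2$ and $\mathcal{F}_2\setminus\mathcal{F}_1$ were nonempty, then picking $B$ in the first and $C$ in the second, $B^{-1}C$ would be upper triangular with both diagonal entries different from $1$, so $B$ and $C$ would not be intersecting — a contradiction. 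Hence $\mathcal{F}\subseteq G_{e_1}$ or $\mathcal{F}\subseteq M$; undoing the conjugations (which permute the point stabilizers among themselves and the stabilizers of $\mathcal{O}_2$-lines among themselves) and the initial translation then shows that the original $\mathcal{F}$ lies in a coset of a point stabilizer or a coset of the stabilizer of a line of $\mathcal{O}_2$, which is Theorem~\ref{thm:main}.

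The step I expect to be the main obstacle is $(\ast)$ — more precisely, the upgrade from a pairwise-shared invariant line to a single common invariant line in the transvection-free case: a priori two different elements of $\mathcal{F}$ could pair the two distinct eigenlines of $A=\operatorname{diag}(1,\lambda)$, and ruling this out is exactly where the intersecting hypothesis is needed in full. One also has to keep track of the degenerate overlaps — the diagonal torus $G_{e_1}\cap M$, the transvection subgroup inside both $G_{e_1}$ and $M$, and the field $q=2$ (which has no semisimple elements, so $(\ast)$ is immediate there) — but all the computations involved are elementary $2\times 2$ matrix algebra.
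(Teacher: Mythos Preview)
Your argument is correct and takes a genuinely different route from the paper's.

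The paper proceeds by introducing the notion of a \emph{base} of $\mathcal{F}$ (a minimal subset not contained in any point stabilizer), proving that every base has size two, and then --- in a rather long coordinate computation (its Theorem~2.7, with separate claims for the non-diagonalizable and diagonalizable cases) --- that every base leaves a line of $\mathcal{O}_1$ invariant. To promote this pairwise conclusion to a single common invariant line, the paper invokes the Hilton--Milner theorem for the Kneser graph $K(q+1,2)$: the pairs of eigenlines of the $A_i$ form a coclique there, so for $|\mathcal{F}|\ge 4$ it must be canonical. A final lemma converts the common $\mathcal{O}_1$-line to an $\mathcal{O}_2$-line.

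Your proof replaces all of this with straight $2\times 2$ matrix algebra. The subtraction trick --- comparing $\tr(B)=\det(B)+1$ with $\tr(A^{-1}B)=\det(A^{-1}B)+1$ --- immediately forces $c=0$ in the transvection case and $(a,bc)=(1,0)$ in the semisimple case, and your determinant identity $\det(B^{-1}C-I)=bc/d'$ handles the pairwise-to-global upgrade without any appeal to Kneser graphs. Your step~$(\ast\ast)$ is also crisper than the paper's Lemma~2.8, whose proof tacitly writes every element as $\left[\begin{smallmatrix}x&y\\0&1\end{smallmatrix}\right]$: you make the dichotomy between the two diagonal entries explicit and derive the incompatibility directly. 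What the paper's approach buys is a pleasant thematic echo --- the classical Hilton--Milner theorem is used to show that no Hilton--Milner phenomenon occurs here --- while yours buys a shorter, fully self-contained proof that never leaves elementary linear algebra.
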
 
	
	We deduce the following corollary.
	\begin{cor}
		If $\mathcal{F} \subset \GL{q}$ is a maximal intersecting set, then $\mathcal{F}$ is a maximum intersecting set.\label{cor:maximal-maximum}
	\end{cor}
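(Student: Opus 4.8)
\emph{Proof strategy for Theorem~\ref{thm:main} (and hence Corollary~\ref{cor:maximal-maximum}).}

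The first move is to convert the intersection condition into linear algebra. For $g,h\in\GL{q}$, the permutations $g$ and $h$ agree on some $\omega\in\Omega_q$ if and only if $g^{-1}h$ has a nonzero fixed vector, i.e. $\det(g^{-1}h-I)=0$; multiplying on the left by the invertible matrix $g$, this is the same as $\det(h-g)=0$, that is, $h-g$ has rank at most one in $M_2(\mathbb{F}_q)$. So $\mathcal F\subseteq\GL{q}$ is intersecting if and only if $\mathcal F$ is a set of invertible matrices, any two of which differ by a rank-$\le1$ matrix; in rank-metric language, $\mathcal F$ is a diameter-$\le1$ anticode consisting of invertible matrices. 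Theorem~\ref{thm:main} is therefore equivalent to the assertion that every such anticode lies inside one of two explicit families.

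Fix $A_0\in\mathcal F$ and translate: $\mathcal F-A_0$ contains $0$, and every element and every pairwise difference has rank $\le1$. The crucial elementary fact is the $2\times2$ Cauchy--Binet identity $\det\bigl(xy^{T}-x'y'^{T}\bigr)=-\det[\,x\mid x'\,]\,\det[\,y\mid y'\,]$, which shows that two nonzero rank-one matrices differ by a rank-$\le1$ matrix exactly when they share a column space or a row space. From this a short ``no triangle'' argument (if $M_1$ and $M_2$ do not share a column space, then neither does any $M_3$, and similarly with rows) forces $\mathcal F-A_0$ to lie inside one of the two maximal subspaces of $M_2(\mathbb{F}_q)$ all of whose elements have rank $\le1$: either $C_u:=\{uw^{T}:w\in\mathbb{F}_q^2\}$ for some nonzero $u$, or $R_v:=\{xv^{T}:x\in\mathbb{F}_q^2\}$ for some nonzero $v$. (The boundary cases --- $|\mathcal F|\le2$, or all nonzero elements of $\mathcal F-A_0$ proportional to a single rank-one matrix --- are immediate, since then $\mathcal F-A_0$ already sits inside both $C_u$ and $R_v$.) Hence $\mathcal F\subseteq A_0+C_u$ or $\mathcal F\subseteq A_0+R_v$.

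It remains to match these two possibilities with the families of Theorem~\ref{thm:EKR-GL}. If $\mathcal F\subseteq A_0+R_v$, pick a nonzero $w$ with $v^{T}w=0$; then $B-A_0\in R_v$ forces $(B-A_0)w=0$, so $\mathcal F\subseteq(A_0+R_v)\cap\GL{q}=\{B\in\GL{q}:Bw=A_0w\}$, which is a coset of a point stabilizer of $\GL{q}$, of size $q(q-1)$. If $\mathcal F\subseteq A_0+C_u$, let $\phi$ be a linear functional with $\ker\phi=\langle u\rangle$; then $B-A_0\in C_u$ is equivalent to $\phi\circ B=\phi\circ A_0$, so $\mathcal F\subseteq(A_0+C_u)\cap\GL{q}$, which is a coset of $H:=\{B\in\GL{q}:\phi\circ B=\phi\}$. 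One checks that $|H|=q(q-1)$ and that every $B\in H$ fixes setwise each affine line $b+\langle u\rangle$, in particular the lines of $\mathcal O_2$ in that parallel class, so $H$ is contained in --- hence, by equality of orders, equal to --- the stabilizer of a line of $\mathcal O_2$. This proves Theorem~\ref{thm:main}. Corollary~\ref{cor:maximal-maximum} follows at once: each of the two families above is itself an intersecting set of size $q(q-1)$, which is the maximum by Theorem~\ref{thm:EKR-GL}, so a maximal intersecting set contained in one of them must coincide with it and is therefore a maximum intersecting set.

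The main obstacle is the anticode classification in the second paragraph: establishing directly --- rather than quoting the ``geometry of $2\times2$ matrices'' literature --- that a set of rank-$\le1$ matrices with rank-$\le1$ pairwise differences is confined to some $C_u$ or some $R_v$, and disposing cleanly of the degenerate configurations. The identifications in the third paragraph are routine but need care, because the $\GL{q}$-action on $\mathbb{F}_q^2\setminus\{0\}$ is not equivalent to its action on the dual space; it is precisely the ``column'' alternative $C_u$ that yields the exotic maximum intersecting sets (stabilizers of lines of $\mathcal O_2$) responsible for the failure of the strict-EKR property.
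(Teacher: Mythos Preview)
Your argument is correct and is genuinely different from the paper's proof. The paper normalises so that $I\in\mathcal F$, introduces the notion of a \emph{base} (a minimal subset witnessing non-canonicity), proves every base has two elements, and then does a case analysis on whether the base matrices are diagonalisable; in the diagonalisable case it builds an auxiliary Kneser graph $K(q+1,2)$ on pairs of eigenspaces and invokes the Hilton--Milner bound for $2$-sets to force a common fixed line in $\mathcal O_1$, which it then upgrades to a line in $\mathcal O_2$. By contrast, you reformulate ``intersecting'' as ``pairwise rank-$\le 1$ differences'' and classify the resulting anticode directly: the Cauchy--Binet identity $\det(xy^{T}-x'y'^{T})=-\det[x\mid x']\det[y\mid y']$ plus the short pigeonhole (if $M_1,M_2$ have distinct column spaces and hence common row space $\langle v\rangle$, any $M_3$ must share row space with both and so lie in $R_v$) already pins $\mathcal F-A_0$ to a single $C_u$ or $R_v$, after which the identification with point-stabiliser cosets and $\mathcal O_2$-line-stabiliser cosets is routine.

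Your route is shorter and more structural: it avoids the diagonalisable/non-diagonalisable split and the Kneser-graph detour, and it makes transparent why exactly two families arise (column versus row anticodes, i.e.\ the natural versus the dual action). The paper's approach, on the other hand, stays entirely inside the permutation-group and eigenspace language and never leaves $\GL q$ for $M_2(\mathbb F_q)$, which keeps it closer to the EKR framework used elsewhere in the paper. The ``main obstacle'' you flag is in fact not much of one: the no-triangle argument you sketch is a complete two-line proof of the $2\times 2$ anticode classification, so you could drop the hedging and simply write it out.
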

	
	A direct consequence of Corollary~\ref{cor:maximal-maximum} is that no Hilton-Milner type result holds for $\GL{q}$.
	One of our arguments in the proof of Theorem~\ref{thm:main} also enables us to prove the Complete EKR theorem for $\GL{q}$.
	
	\begin{thm}
		Let $q$ be a prime power. If $H\leq \GL{q}$ is transitive on $\Omega_q$, then $H$ has the EKR property. Moreover, an intersecting set of $H$ is contained in a coset of a stabilizer of an element of $\Omega_q$ in $H$ or in a coset of a stabilizer of a line of $\mathcal{O}_2$ in $H$. \label{thm:main2}
	\end{thm}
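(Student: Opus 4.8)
\textbf{Proof strategy for Theorem~\ref{thm:main2}.} The plan is to deduce the statement from the structural result for $\GL{q}$ (Theorem~\ref{thm:main}) by restriction to the subgroup $H$, the only genuinely new point being that a subgroup of $\GL{q}$ transitive on $\Omega_q$ is automatically transitive on $\mathcal{O}_2$ as well.

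Set $G=\GL{q}$, and let $\mathcal{F}\subseteq H$ be intersecting; we may assume $\mathcal{F}\neq\varnothing$. Since $H$ and $G$ act on $\Omega_q$ in the same way, $\mathcal{F}$ is also an intersecting subset of $G$, so by Theorem~\ref{thm:main} there is $g\in G$ such that either $\mathcal{F}\subseteq gG_{\omega}$ for some $\omega\in\Omega_q$, or $\mathcal{F}\subseteq gG_{\ell'}$ for some line $\ell'\in\mathcal{O}_2$, where $G_{\omega}$ and $G_{\ell'}$ denote the stabilizers in $G$ of the point $\omega$ and of the line $\ell'$. Picking $h_0\in\mathcal{F}$, we have $gG_{\omega}=h_0G_{\omega}$, and for $x\in H$ the condition $x\in h_0G_{\omega}$ is equivalent to $h_0^{-1}x\in G_{\omega}\cap H=H_{\omega}$; hence $\mathcal{F}\subseteq H\cap h_0G_{\omega}=h_0H_{\omega}$, a coset of a point stabilizer of $H$. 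The line case is identical and gives $\mathcal{F}\subseteq h_0H_{\ell'}$, a coset in $H$ of the stabilizer of the line $\ell'\in\mathcal{O}_2$. This already establishes the structural part of the theorem.

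For the EKR property, note that in the point-stabilizer case $|\mathcal{F}|\le|h_0H_{\omega}|=|H_{\omega}|$, which is the order of a point stabilizer, so we are done. In the line-stabilizer case $|\mathcal{F}|\le|H_{\ell'}|$, and it suffices to prove $|H_{\ell'}|\le|H_{\omega}|$. By the orbit--stabilizer lemma, transitivity of $H$ on $\Omega_q$ gives $|H_{\omega}|=|H|/(q^2-1)$, while $|H_{\ell'}|=|H|/|(\ell')^H|$ with $(\ell')^H\subseteq\mathcal{O}_2$ and $|\mathcal{O}_2|=q^2-1$; so $|H_{\ell'}|\le|H_{\omega}|$ is equivalent to $H$ acting transitively on $\mathcal{O}_2$ (and then in fact $|H_{\ell'}|=|H_{\omega}|$, so $H_{\ell'}$ is itself a maximum intersecting set of $H$). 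Thus the whole theorem reduces to this one transitivity statement.

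The remaining --- and main --- step is to show that any $H\le\GL{q}$ transitive on $\Omega_q$ is transitive on $\mathcal{O}_2$. First I would identify $\mathcal{O}_2$ with $W:=(\mathbb{F}_q^2)^{*}\setminus\{0\}$ as an $H$-set: every $\ell'\in\mathcal{O}_2$ is the solution set of a unique linear equation $\varphi(v)=1$ with $\varphi\in W$, and $A\cdot\ell'$ is the solution set of $(\varphi\circ A^{-1})(v)=1$, so $\ell'\mapsto\varphi$ is an $H$-equivariant bijection. Then I would compare the two actions via the orbit-counting lemma. For $A\in H$, the fixed points of $A$ on $\Omega_q$ are the nonzero vectors of $\ker(A-I)$, so there are $q^{\dim\ker(A-I)}-1$ of them; the fixed points of $A$ on $W$ are the nonzero functionals vanishing on $\operatorname{im}(A-I)$, so there are $q^{2-\operatorname{rank}(A-I)}-1=q^{\dim\ker(A-I)}-1$ of them by rank--nullity. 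Hence $A$ has the same number of fixed points in both actions, so $H$ has equally many orbits on $\Omega_q$ and on $\mathcal{O}_2$; since $H$ is transitive on $\Omega_q$, it is transitive on $\mathcal{O}_2$. The hard part is precisely this observation --- that the permutation characters of $H$ on the points $\Omega_q$ and on the lines $\mathcal{O}_2$ coincide, so that transitivity transfers from one to the other for free; everything else is routine coset bookkeeping layered on top of Theorem~\ref{thm:main}.
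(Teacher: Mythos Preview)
Your proof is correct and follows the same overall architecture as the paper: reduce to Theorem~\ref{thm:main}, intersect with $H$ to get the structural statement, and then reduce the EKR inequality to the claim that $H$ is transitive on $\mathcal{O}_2$. The only divergence is in how that transitivity is established. The paper argues constructively: given two lines $a+\langle u\rangle$ and $b+\langle v\rangle$ in $\mathcal{O}_2$, it picks $A\in H$ with $Au=v$, writes $Aa=\alpha b+\beta v$, and sets $C=\alpha^{-1}A$ to carry the first line to the second. Your argument is instead character-theoretic: you identify $\mathcal{O}_2$ $H$-equivariantly with the nonzero functionals and observe, via rank--nullity, that every $A\in H$ has exactly as many fixed points on $\Omega_q$ as on the dual, so by the orbit-counting lemma the orbit counts coincide. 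Your route is more conceptual and in fact more robust: the paper's step $C=\alpha^{-1}A\in H$ tacitly assumes the scalar $\alpha^{-1}I$ lies in $H$, which is not guaranteed for an arbitrary transitive subgroup (for instance $H=\SL{q}$ with $q$ odd does not contain all scalars). Your permutation-character argument sidesteps this entirely, and as a bonus shows that the $H$-actions on $\Omega_q$ and on $\mathcal{O}_2$ have identical permutation characters, not merely the same number of orbits.
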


	Our proof for Theorem~\ref{thm:main} and Theorem~\ref{thm:main2} are given in the next section.
	
\section{Proof of the main results}

	In this section, we consider intersecting sets of $\GL{q}$ that are not contained in any coset of a point stabilizer and prove that $\mathcal{F}$ is in a stabilizer of a line of $\mathcal{O}_2$. Let $\mathcal{F} \subset \GL{q}$ be such intersecting sets. Without loss of generality, we may assume that the identity matrix $I$ belongs to $\mathcal{F}$ since we can always shift $\mathcal{F}$ with the inverse of an element $A\in \mathcal{F}$ and consider $A^{-1}\mathcal{F} = \{ A^{-1}B \mid B\in \mathcal{F} \} \ni I$.

	Next, we prove the following lemma about change of basis (the same argument was used in \cite{ahanjideh2021largest}).
	
	\begin{lem}
		Let $\mathcal{F} \subset \GL{q}$ and let $u\mbox{ and }v$ be two linearly independent vectors of $\mathbb{F}_q^2$. Let $\mathcal{F}_B$ be set of all matrices of $\mathcal{F}$ written in the basis $B = \{u,v\}$. Then, $\mathcal{F}$ is intersecting if and only if $\mathcal{F}_B$ is intersecting.
		\label{lem:change-of-basis}
	\end{lem}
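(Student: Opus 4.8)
The plan is to observe that rewriting the matrices of $\mathcal{F}$ in the basis $B=\{u,v\}$ is the same as conjugating every element of $\mathcal{F}$ by a single fixed matrix, and then to check that the intersecting condition is invariant under such simultaneous conjugation. So the first step is to translate the combinatorial notion of intersecting into a linear-algebraic one: viewing $A,C\in\GL{q}$ as permutations of $\Omega_q$ by left multiplication, $A$ and $C$ are intersecting exactly when there is a nonzero $w$ with $Aw=Cw$, i.e.\ $A^{-1}Cw=w$, i.e.\ $1$ is an eigenvalue of $A^{-1}C$ (equivalently $\det(A^{-1}C-I)=0$). Hence $\mathcal{F}$ is intersecting if and only if $1$ is an eigenvalue of $A^{-1}C$ for every pair $A,C\in\mathcal{F}$.

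Next I would set up the change of basis explicitly. Let $P\in\GL{q}$ be the matrix whose columns are $u$ and $v$; it is invertible because $u,v$ are linearly independent. For $A\in\GL{q}$, the matrix of the same linear map expressed in the basis $B$ is $P^{-1}AP$, so $\mathcal{F}_B=\{\,P^{-1}AP : A\in\mathcal{F}\,\}$. Then for any $A,C\in\mathcal{F}$,
\[
  (P^{-1}AP)^{-1}(P^{-1}CP)=P^{-1}A^{-1}C\,P,
\]
which is conjugate to $A^{-1}C$ and therefore has the same characteristic polynomial. In particular $1$ is an eigenvalue of $(P^{-1}AP)^{-1}(P^{-1}CP)$ if and only if it is an eigenvalue of $A^{-1}C$. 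Combining this with the criterion from the first step, $\mathcal{F}_B$ is intersecting if and only if $\mathcal{F}$ is intersecting, which is the claim.

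I do not expect a genuine obstacle here: the argument is a one-line observation once the eigenvalue criterion is in place. The only points requiring a little care are bookkeeping ones, namely matching the convention that $\GL{q}$ acts by left multiplication with the right-conjugation formula $A\mapsto P^{-1}AP$ for a change of basis, and noting that simultaneous conjugation by $P$ commutes with forming inverses and products (so that $(P^{-1}AP)^{-1}(P^{-1}CP)$ really is $P^{-1}(A^{-1}C)P$). Both are immediate, so the lemma follows with no difficulty.
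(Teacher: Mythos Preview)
Your proof is correct and follows essentially the same approach as the paper: both write $\mathcal{F}_B$ as the simultaneous conjugate $\{P^{-1}AP:A\in\mathcal{F}\}$ and then use that $(P^{-1}AP)^{-1}(P^{-1}CP)=P^{-1}(A^{-1}C)P$ has a fixed nonzero vector if and only if $A^{-1}C$ does. The only cosmetic difference is that you phrase the last step via invariance of the characteristic polynomial under conjugation, whereas the paper exhibits the fixed vector explicitly as $P^{-1}u$.
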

	\begin{proof}
		Let $A,B \in \mathcal{F}$. If $P$ is the change-of-basis matrix corresponding to $\{u,v\}$, then, in the new basis, $A$ becomes $P^{-1}AP$ and $B$ becomes $P^{-1}BP$. Therefore, if there exists $u
		\in \Omega_q$ such that $A^{-1}Bu = u$ then $(P^{-1}A^{-1}P) P^{-1} BP = P^{-1} A^{-1}BP$ fixes $P^{-1}u$. Conversely, if $v\in \Omega_q$ is fixed by $P^{-1} A^{-1}BP$, then $Pv$ is fixed by $A^{-1}B$. Consequently, $\mathcal{F}$ is intersecting if and only if $\mathcal{F}_B$ is intersecting.
	\end{proof}
	
	Since $\mathcal{F}$ is non-canonical, the following holds
	\begin{align}
		\forall u\in \Omega_q,\ (\exists A_u \in \mathcal{F})  \mbox{ such that } (A_u u \neq u).\label{eq:main-formula}
	\end{align}
	Moreover, since $\mathcal{F}$ is intersecting and $I\in \mathcal{F}$, any non-identity element of $\mathcal{F}$ must fix an element of $\Omega_q$. That is, a non-identity element has an eigenvalue equal to $1$.
	\begin{prop}
		If $A\in \GL{q} \setminus \{I\}$ fixes a point, then it fixes exactly $q-1$ points.\label{prop:fixed-points}
	\end{prop}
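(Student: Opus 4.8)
The plan is to identify the fixed-point set of $A$ on $\Omega_q$ with the nonzero vectors of an eigenspace and then count. Observe that a vector $v\in\mathbb{F}_q^2$ is fixed by $A$ precisely when $(A-I)v=0$, so the set of all vectors fixed by $A$ is the subspace $\ker(A-I)\leq \mathbb{F}_q^2$, and the fixed points of $A$ in $\Omega_q$ are exactly the nonzero elements of this subspace.

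Next I would pin down the dimension of $\ker(A-I)$. Since $A\neq I$ we have $A-I\neq 0$, so the rank of $A-I$ is at least $1$ and hence $\dim\ker(A-I)\leq 1$ by the rank--nullity theorem in dimension $2$. On the other hand, the hypothesis that $A$ fixes a point of $\Omega_q$ means there is a nonzero $v$ with $Av=v$, i.e. $\ker(A-I)\neq\{0\}$, so $\dim\ker(A-I)\geq 1$. Therefore $\ker(A-I)$ is a $1$-dimensional subspace of $\mathbb{F}_q^2$.

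Finally I would count: a $1$-dimensional $\mathbb{F}_q$-subspace has exactly $q$ vectors, one of which is $0$, so it contributes exactly $q-1$ vectors to $\Omega_q$. Hence $A$ fixes exactly $q-1$ points of $\Omega_q$, as claimed. I do not anticipate a genuine obstacle here; the argument is elementary linear algebra over $\mathbb{F}_q$. The only point requiring a word of care is the reading of the statement: ``fixes a point'' and ``points'' both refer to elements of $\Omega_q=\mathbb{F}_q^2\setminus\{0\}$, and one should note in passing that an element of $\GL{q}$ fixing a nonzero vector is the same as one having $1$ as an eigenvalue, which connects this proposition to the remark preceding it that every non-identity element of an intersecting $\mathcal{F}\ni I$ has eigenvalue $1$.
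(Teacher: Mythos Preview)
Your proof is correct and follows essentially the same idea as the paper's: both identify the fixed points with the nonzero vectors of the eigenspace $\ker(A-I)$ and show this subspace is $1$-dimensional. The only cosmetic difference is that the paper pins down the dimension by listing the two possible similarity classes (a Jordan block or a diagonal matrix with a second eigenvalue $\lambda\neq 1$), whereas you use rank--nullity directly; your route is a touch more streamlined but not substantively different.
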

	\begin{proof}
		If there exists $x\in \Omega_q$ such that $Ax =x$, then $x$ is an eigenvector with eigenvalue $1$ for $A$. Thus, $A$ is similar to one of
		\begin{align*}
			\begin{bmatrix}
				1 & 1\\
				0 &1
			\end{bmatrix} \mbox{ or }
			\begin{bmatrix}
				1 & 0 \\
				0 & \lambda
			\end{bmatrix},\mbox{ where }\lambda\neq 1.
		\end{align*}
		The former has only one eigenspace (containing $q-1$ eigenvectors), and the latter has two eigenspa\-ces with eigenvalues $1$ and $\lambda$.  The $q-1$ eigenvectors from the eigenspace with eigenvalue $1$ are fixed by $A$.
	\end{proof}

	Using \eqref{eq:main-formula}, for any $u\in \Omega_q$, we can find $A_u\in \mathcal{F}$ satisfying $A_u u \neq u$. If $u^\prime = ku$, for some $k\in \mathbb{F}_q^*$, then 
	\begin{align*}
		A_u u^\prime &= A_u (ku) = k A_u u\neq ku = u^\prime.
	\end{align*}
	Therefore, if $A_u$ does not fix $x$, then it does not fix any point of the line $\langle x\rangle$. Hence, we may consider only $q+1$ points from each of the $q+1$ lines in $\mathcal{O}_1.$ We let $u_1,u_2,\ldots,u_{q+1}$ be a set of pairwise non-colinear points of $\Omega_q$ (i.e., they are representatives of the lines of $\mathcal{O}_1$).
	
	\begin{defn}
		A \emph{base} of the intersecting set $\mathcal{F}$ is a set $\mathcal{B}(\mathcal{F})$ satisfying the following: 
		\begin{enumerate}[(i)]
			\item for every $i \in \{1,2,\ldots, q+1\}$, there exists $A_i \in \mathcal{B}(\mathcal{F})$ such that $A_i u_i \neq u_i$;\label{first}
			\item $\mathcal{B}(\mathcal{F})$ is minimal in the sense of \eqref{first}. That is, if $S \subsetneq C_q(\mathcal{F})$, then there exists $u \in \Omega_q$ such that for all $A\in S$, $A u = u$ (i.e., $S$ is canonical).
		\end{enumerate}\label{def:basis}
	\end{defn}
	
	Since $I\in \mathcal{F}$ and $I$ fixes every element of $\Omega_q$, $I$ does not belong to any base of an intersecting set. An intersecting set of $\GL{q}$ need not have a base. For instance, if $\mathcal{F}$ is the stabilizer of a point of $\Omega_q$, then $\mathcal{F}$ does not have a base. However, an intersecting set satisfying \eqref{eq:main-formula} must have a base.
	
	The following lemma is straightforward.
		
	\begin{lem}
		An intersecting set of $\GL{q}$ that does not contain a base is contained in a coset of a stabilizer of a point.\label{lem:base}
	\end{lem}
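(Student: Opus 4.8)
The plan is to prove the contrapositive: every intersecting set $\mathcal{F}$ that is \emph{not} contained in a coset of a point stabilizer contains a base. Under the running normalization $I\in\mathcal{F}$, being contained in a coset of a point stabilizer is the same as being contained in a point stabilizer, i.e. the same as being canonical, so what has to be shown is that a non-canonical $\mathcal{F}$ contains a base. The whole argument is a reading of the definitions and involves no computation.

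First I would rephrase non-canonicity in the language of Definition~\ref{def:basis}. If $\mathcal{F}$ is non-canonical then \eqref{eq:main-formula} holds, so every $u\in\Omega_q$ is moved by some element of $\mathcal{F}$; by the remark following Proposition~\ref{prop:fixed-points}, whether a matrix fixes a vector depends only on the line of $\mathcal{O}_1$ through it, so this is equivalent to saying that for each representative $u_i$ ($i=1,\dots,q+1$) there is $A_i\in\mathcal{F}$ with $A_i u_i\neq u_i$. Thus $\{A_1,\dots,A_{q+1}\}$ is a finite subset of $\mathcal{F}$ satisfying clause (i) of Definition~\ref{def:basis}.

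Next I would extract a base by minimization. Among the subsets of $\{A_1,\dots,A_{q+1}\}$ that still satisfy clause (i) --- a nonempty, finite collection --- pick one of least cardinality, say $\mathcal{B}$. Then no proper subset of $\mathcal{B}$ satisfies clause (i); using again that the fixed-point relation is constant along the lines of $\mathcal{O}_1$, ``$S$ fails clause (i)'' is precisely ``$S$ has a common fixed point'', i.e. ``$S$ is canonical'', so $\mathcal{B}$ also satisfies clause (ii). Hence $\mathcal{B}$ is a base and $\mathcal{B}\subseteq\mathcal{F}$, which proves the contrapositive. Reading this in the forward direction gives the lemma: if $\mathcal{F}$ contains no base then clause (i) must already fail for $\mathcal{F}$ itself, so some $u_{i_0}$ is fixed by every element of $\mathcal{F}$ and $\mathcal{F}$ lies inside the stabilizer of $u_{i_0}$, a coset of a point stabilizer.

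I do not expect a genuine obstacle. The only thing needing care is the bookkeeping equivalence between \eqref{eq:main-formula} (a condition quantifying over all of $\Omega_q$) and clause (i) of Definition~\ref{def:basis} (a condition quantifying over the $q+1$ fixed line representatives), together with the parallel reformulation of the minimality clause (ii); both are immediate consequences of the fact, already recorded, that a matrix fixes $u$ if and only if it fixes every nonzero vector of $\langle u\rangle$.
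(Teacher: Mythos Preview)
Your proposal is correct and is precisely the unpacking of Definition~\ref{def:basis} that the paper has in mind: the paper states the lemma as ``straightforward'' and gives no proof, and your argument --- build a finite set witnessing clause~(i) from \eqref{eq:main-formula} and pass to a subset of minimal cardinality to obtain clause~(ii) --- is the intended one-line verification.
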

	
	\begin{obs}
		A consequence of Lemma~\ref{lem:base} is that if $S$ is a proper subset of a base of $\mathcal{F}$, then there exists $u\in \Omega_q$ such that for all $A\in S$, we have $A u =u$. In particular, $A$ fixes the line $\langle u\rangle$ pointwise.\label{observation}
	\end{obs}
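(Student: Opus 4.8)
The plan is to deduce the Observation from the minimality built into Definition~\ref{def:basis}(\ref{first}) together with Lemma~\ref{lem:base}. Fix a proper subset $S \subsetneq \mathcal{B}(\mathcal{F})$. The first step is to argue that $S$ itself contains no base. Indeed, if some base $\mathcal{B}'$ satisfied $\mathcal{B}' \subseteq S$, then $\mathcal{B}'$ would be a proper subset of $\mathcal{B}(\mathcal{F})$ still satisfying condition (\ref{first}), contradicting the minimality of $\mathcal{B}(\mathcal{F})$; hence no base lies inside $S$. Since $S \subseteq \mathcal{B}(\mathcal{F}) \subseteq \mathcal{F}$ and $\mathcal{F}$ is intersecting, $S$ is intersecting. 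Thus $S$ is an intersecting set containing no base, and Lemma~\ref{lem:base} applies: $S$ is contained in a coset of a point stabilizer.

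The second step is to pin down that the common point can be chosen to be genuinely fixed, not merely a common image. Here I would read the minimality condition directly: because $\mathcal{B}(\mathcal{F})$ is minimal with respect to (\ref{first}), the proper subset $S$ fails (\ref{first}). Negating (\ref{first}), there is an index $i \in \{1,\dots,q+1\}$ for which no element of $S$ moves $u_i$; that is, $A u_i = u_i$ for every $A \in S$. Taking $u = u_i$ gives the desired common fixed vector, and this is consistent with the conclusion of Lemma~\ref{lem:base}, since the stabilizer of $u$ is precisely the (trivial) coset $V_{u,u}$ of a point stabilizer.

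Finally, the ``in particular'' clause is immediate from the scalar-multiple computation already recorded before Definition~\ref{def:basis}: if $A u = u$ and $k \in \mathbb{F}_q^*$, then $A(ku) = k(Au) = ku$, so $A$ fixes every nonzero scalar multiple of $u$ and hence fixes the line $\langle u\rangle$ pointwise.

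The main obstacle I anticipate is reconciling the two notions of ``canonical'' in play. Lemma~\ref{lem:base} only guarantees that $S$ sits inside a coset $V_{\omega,\omega'}$ of a point stabilizer, which a priori yields a common \emph{image} $A\omega = \omega'$ for all $A\in S$ rather than a common \emph{fixed} vector. The resolution is exactly the second step above: the minimality of the base supplies the stronger conclusion, because the failure of (\ref{first}) for $S$ produces a line representative $u_i$ that every element of $S$ fixes, forcing $\omega = \omega' = u_i$ so that the relevant coset is in fact the stabilizer $V_{u_i,u_i}$. I would be careful to phrase this step so that the fixed-point conclusion (and not merely a common image) is unambiguous, since this is what the subsequent arguments rely on.
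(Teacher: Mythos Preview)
Your argument is correct, and in fact sharper than necessary: the paper gives no proof for this Observation because it is a direct restatement of condition~(ii) in Definition~\ref{def:basis}, which already says verbatim that any proper subset $S\subsetneq \mathcal{B}(\mathcal{F})$ admits some $u\in\Omega_q$ with $Au=u$ for all $A\in S$. Your second step is exactly this reading of the definition, and your third step (the ``in particular'' clause via linearity) is correct.

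Your first step through Lemma~\ref{lem:base} is a harmless detour, and you correctly diagnose why it is insufficient on its own: the lemma only places $S$ in a coset $V_{\omega,\omega'}$, which gives a common image rather than a common fixed vector. You then resolve this by appealing to minimality directly, which is the actual content of the Observation. So the invocation of Lemma~\ref{lem:base} can simply be dropped; the paper's phrasing ``a consequence of Lemma~\ref{lem:base}'' is somewhat loose, since the real source is the minimality clause in the definition of a base.
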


	\begin{exa}
		Consider the subset of $\GL{5}$ given by
		\begin{align*}
			\mathcal{F} = \left\{ 
			\begin{bmatrix}
				1 & 0\\
				0 & 1
			\end{bmatrix},
			\begin{bmatrix}
				2 & 1\\
				0 & 1
			\end{bmatrix},
			\begin{bmatrix}
				1 & 1\\
				0 & 1
			\end{bmatrix}
			,
			\begin{bmatrix}
			3 & 1\\
			0 & 1
			\end{bmatrix}			
			 \right\}.
		\end{align*}
		Note that a necessary condition for $\mathcal{F}$ to be intersecting is that all matrices have eigenvalue equal to $1$. It is easy to see that $\mathcal{F}$ is a non-canonical intersecting set. Moreover, the set  
		\begin{align*}
			\mathcal{B}(\mathcal{F})  = \left\{ 
			\begin{bmatrix}
				2 & 1\\
				0 & 1
			\end{bmatrix},
			\begin{bmatrix}
			1 & 1\\
			0 & 1
			\end{bmatrix}			
			\right\}
		\end{align*}
		is a base of $\mathcal{F}$. Indeed, the matrix $A_1 = \begin{bmatrix}
		2 & 1\\
		0 & 1
		\end{bmatrix}$ has an eigenvalue equal to $1$ and the corresponding $1$-dimensional eigenspace $\ell_1 = \langle [1,-1]^T\rangle$ is fixed pointwise. Additionally, the matrix $A_2 = \begin{bmatrix}
		1 & 1\\
		0 & 1
		\end{bmatrix}$ fixes pointwise its eigenspace $\ell_2 = \langle [1,0]^T\rangle$ corresponding to the eigenvalue $1$. Therefore, if $x \not\in \ell_1\cup \ell_2$, then $x$ is not fixed by any matrix in $\mathcal{B}(\mathcal{F})$. If $x \in \ell_1$, then $A_2 x \neq x$ and if $x\in \ell_2$, then $A_1x \neq x$. 
		
		It is not hard to see that the set 
		\begin{align*}
		\mathcal{B}(\mathcal{F})  = \left\{ 
		\begin{bmatrix}
		2 & 1\\
		0 & 1
		\end{bmatrix},
		\begin{bmatrix}
		3 & 1\\
		0 & 1
		\end{bmatrix}			
		\right\}
		\end{align*}
		is another base of $\mathcal{F}$.
	\end{exa}

	This example shows that a non-canonical intersecting set can have more than one base and the size of a base of a non-canonical intersecting set can be as small as $2$. In the next lemma, we prove that a base of $\GL{q}$ has size $2$. 

	\begin{lem}
		Let $\mathcal{F}$ be a non-canonical intersecting set of $\GL{q}$. Any base of $\mathcal{F}$ has size two.\label{lem:base-of-size2}
	\end{lem}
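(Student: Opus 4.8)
The plan is to show that a base can be neither a singleton nor of size three or more, leaving size exactly two. Two facts already in hand do the work: (a) every non-identity element of $\mathcal{F}$ has eigenvalue $1$, hence by Proposition~\ref{prop:fixed-points} fixes pointwise exactly one line of $\mathcal{O}_1$, which I will call its \emph{fixed line}; and (b) by Observation~\ref{observation}, every proper subset $S$ of a base is canonical, i.e.\ there is some $u\in\Omega_q$ with $Au=u$ for all $A\in S$.

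First I would upgrade (b) to a uniqueness statement: if $S$ is a nonempty proper subset of a base, then \emph{all} elements of $S$ have the same fixed line $\ell$, and if $u_i$ is the representative of $\mathcal{O}_1$ with $\langle u_i\rangle=\ell$, then $u_i$ is fixed by every element of $S$. This is immediate from (a) and (b): the common fixed point $u$ supplied by (b) lies on the fixed line of each $A\in S$, and distinct lines of $\mathcal{O}_1$ meet only in $0$. Now suppose toward a contradiction that a base $\mathcal{B}(\mathcal{F})=\{A_1,\dots,A_m\}$ has $m\geq 3$, and apply the uniqueness statement to the two proper subsets $S_1=\mathcal{B}(\mathcal{F})\setminus\{A_1\}$ and $S_2=\mathcal{B}(\mathcal{F})\setminus\{A_2\}$ (both nonempty since $m\geq 3$), obtaining common fixed lines $\ell_1$ and $\ell_2$ respectively. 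Because $m\geq 3$, the matrix $A_3$ lies in $S_1\cap S_2$, so it fixes pointwise both $\ell_1$ and $\ell_2$; as $A_3\neq I$, fact (a) forces $\ell_1=\ell_2$. On the other hand, write $\ell_1=\langle u_i\rangle$; the defining property of a base provides a matrix $A_j\in\mathcal{B}(\mathcal{F})$ with $A_j u_i\neq u_i$, and since every element of $S_1$ fixes $u_i$, we must have $A_j=A_1$, so $A_1$ does \emph{not} fix $\ell_1$ pointwise. But $A_1\in S_2$, whose every element fixes $\ell_2=\ell_1$ pointwise — a contradiction. Hence $m\leq 2$.

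It remains to note that $m\geq 2$: a non-canonical $\mathcal{F}$ satisfies~\eqref{eq:main-formula} and therefore has a nonempty base, and a singleton base $\{A_1\}$ is impossible because the lone matrix $A_1\neq I$ fixes pointwise some line $\langle u_i\rangle$, violating the defining property of a base at that index. Thus every base of $\mathcal{F}$ has size exactly two. The whole argument is short, and I expect no genuine obstacle: the structural input — at most one pointwise-fixed line per non-identity matrix — is already delivered by Proposition~\ref{prop:fixed-points}, so the only thing to watch is the bookkeeping around minimality, in particular that the contradiction truly requires $m\geq 3$ (the element $A_3\in S_1\cap S_2$ is exactly what pins down $\ell_1=\ell_2$), whereas for a base of size two the two fixed lines are instead forced to be distinct.
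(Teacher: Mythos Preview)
Your argument is correct and follows essentially the same strategy as the paper: use minimality of a base (Observation~\ref{observation}) together with the fact that each non-identity element has a unique pointwise-fixed line (Proposition~\ref{prop:fixed-points}) to force a common fixed vector for the whole base, contradicting property~(i). Your organization is in fact a bit cleaner than the paper's---by working with the maximal proper subsets $S_1,S_2$ you handle arbitrary $m\geq 3$ uniformly and avoid the paper's case split into diagonalizable versus non-diagonalizable $A_1$, and you also make explicit the easy lower bound $m\geq 2$, which the paper leaves implicit.
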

	\begin{proof}
		We prove this assertion by a minimal counterexample. Assume that $\mathcal{F}$ has a base $\mathcal{B}(\mathcal{F}) $ $=\{A_1,A_2,A_3\}$. Since $I\in \mathcal{F}$, then $A_i$ fixes an element of $\Omega_q$, for any $i\in \{1,2,3\}$. In particular, the matrix $A_i$ fixes a line of $\mathcal{O}_1$ pointwise, for any $i\in \{1,2,3\}$.
		
		Suppose that $A_1$ is not diagonalizable. Then $1$ is an eigenvalue of $A_1$ with algebraic multiplicity $2$. Let $u \in \Omega_q$ be such that $A_1 u = u$ and $\ell = \langle u\rangle$. Note that the matrices in $\{A_1,A_2\}$ must fix a common point by minimality of a base (see Definition~\ref{def:basis}). This common fixed point must be in $\ell$. Therefore, $A_1$ and $A_2$ both fix $u$. Similarly, $A_1$ and $A_3$ must fix $u$. In other words, $\{A_1,A_2,A_3\}$ is contained in the stabilizer of the vector $u$ in $\GL{q}$. This is a contradiction.
		
		Suppose that $A_1$ is diagonalizable. Since $A_1 \neq I$, the eigenvalues of $A_1$ are $1$ and $k \neq 1$. Let $u,v \in \Omega_q$ such that $A_1 u = u$, $A_1 v = kv$, $\ell_1 = \langle u\rangle$, and $\ell_2 = \langle v\rangle$. By definition of a base, $\{A_1,A_2\}$ must fix a vector $x \in \Omega_q$. It is obvious that $x \in \ell_1$ or $x \in \ell_2$. Since $A_1 x = x$, and $A_1$ fixes $\ell_1$ pointwise, we have $x\in \ell_1$. Therefore, $A_1$ and $A_2$ both fix $u$. Similarly, $A_1$ and $A_3$ also fix $u$. In other words, $\{A_1,A_2,A_3\}$ is in the stabilizer of $u$ in $\GL{q}$. This is also a contradiction.
	\end{proof}
	
	\begin{thm}
		If $\mathcal{F}$ is an intersecting set satisfying \eqref{eq:main-formula} and $I\in \mathcal{F}$, then any base  of $\mathcal{F}$ fixes a line in $\mathcal{O}_1$.\label{thm:bases}
	\end{thm}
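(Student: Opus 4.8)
The plan is to combine Lemma~\ref{lem:base-of-size2} with a short explicit computation in a convenient basis. Fix a base $\mathcal{B}(\mathcal{F})$; by Lemma~\ref{lem:base-of-size2} it has exactly two elements, say $\mathcal{B}(\mathcal{F})=\{A_1,A_2\}$. Since $I\in\mathcal{F}$ and $\mathcal{F}$ is intersecting, neither $A_i$ equals $I$ and each fixes a point of $\Omega_q$, so by Proposition~\ref{prop:fixed-points} it fixes a line $\ell_i\in\mathcal{O}_1$ pointwise. If $\ell_1=\ell_2$ then both matrices fix that line and there is nothing more to prove, so assume $\ell_1\neq\ell_2$. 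Using Lemma~\ref{lem:change-of-basis} I would change basis to $\{u,v\}$ with $\ell_1=\langle u\rangle$ and $\ell_2=\langle v\rangle$, so that
\[
A_1=\begin{bmatrix}1 & b\\ 0 & d\end{bmatrix},\qquad A_2=\begin{bmatrix}a & 0\\ c & 1\end{bmatrix},
\]
with $a,d\in\mathbb{F}_q^*$, $b,c\in\mathbb{F}_q$, and $A_1\neq I\neq A_2$.

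The crucial input is the intersecting condition. Since $I,A_1,A_2\in\mathcal{F}$ and $\mathcal{F}$ is intersecting, $A_1^{-1}A_2$ fixes a point, i.e.\ has eigenvalue $1$, which is equivalent to $\det(A_1^{-1}A_2-I)=0$. A direct computation reduces this to
\[
bc=(1-a)(1-d).
\]
The point I would then make is that this is precisely the condition for $A_1$ and $A_2$ to share an invariant line: when $d\neq 1$ the eigenline of $A_1$ for the eigenvalue $d$ is $\langle[b,\,d-1]^{T}\rangle$, and when $a\neq 1$ the eigenline of $A_2$ for the eigenvalue $a$ is $\langle[a-1,\,c]^{T}\rangle$, and these two lines coincide exactly when $bc-(a-1)(d-1)=0$, i.e.\ exactly when the displayed identity holds. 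Transporting the common line back through the change of basis then finishes the proof.

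To complete the argument I would dispose of the unipotent cases. If $a=d=1$, the identity forces $bc=0$, hence $A_1=I$ or $A_2=I$, which is impossible. If $d=1\neq a$, then $A_1\neq I$ forces $b\neq 0$, so the identity forces $c=0$; then $A_2=\operatorname{diag}(a,1)$ leaves $\langle u\rangle=\ell_1$ invariant, and $\ell_1$ is already fixed by $A_1$. The case $a=1\neq d$ is symmetric, with common invariant line $\langle v\rangle=\ell_2$. Finally, if $a\neq 1$ and $d\neq 1$, the identity forces $b\neq 0$ and $c\neq 0$, so both eigenlines above are genuine lines distinct from $\langle u\rangle$ and $\langle v\rangle$, and they coincide. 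In every case the base $\{A_1,A_2\}$ has a common invariant line in $\mathcal{O}_1$.

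The only part with any substance is the computation $\det(A_1^{-1}A_2-I)=0\iff bc=(1-a)(1-d)$ together with the observation that the right-hand side is the collinearity condition for the two non-obvious eigendirections of $A_1$ and $A_2$; everything else is a routine split into the transvection and the diagonalizable cases, the only thing to watch being that a transvection has a single invariant line.
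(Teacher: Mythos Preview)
Your proof is correct and follows essentially the same approach as the paper: reduce to a two-element base via Lemma~\ref{lem:base-of-size2}, write $A_1,A_2$ in the basis $\{u,v\}$ determined by their fixed lines, extract the identity $bc=(1-a)(1-d)$ from $\det(A_1^{-1}A_2-I)=0$, and split into the unipotent and diagonalizable cases. Your execution is somewhat tighter than the paper's---you read off the second eigenlines $\langle[b,\,d-1]^{T}\rangle$ and $\langle[a-1,\,c]^{T}\rangle$ directly and observe that their collinearity condition is exactly the derived identity, whereas the paper introduces auxiliary parameters $\alpha,\beta,\gamma,\delta,k,t$ for the eigenvectors and eigenvalues before reaching the same conclusion.
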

	\begin{proof}
		Since $\mathcal{F}$ satisfies \eqref{eq:main-formula}, it admits a base of the form $\mathcal{B}(\mathcal{F}) = \{ A_1,A_2 \}$ (see Lemma~\ref{lem:base-of-size2}). By Observation~\ref{observation}, for any $i\in \{1,2\}$ the matrix of $A_i$ fixes pointwise a line which we denote  $\ell_i \in \mathcal{O}_1$.

		If $\ell_1 = \ell_2,$ then $\mathcal{B}(\mathcal{F})$ fixes the line $\ell_1$.
		Assume that the lines $\ell_1,\ell_2$ are distinct and $\ell_1 = \langle u\rangle$ and $\ell_2 = \langle v\rangle$. Since $A_1$ and $A_2$ fix pointwise $\ell_1$ and $\ell_2$ respectively, we know that $A_1 u = u$ and $A_2 v= v$. Let us represent the matrices $A_1$ and $A_2$ in the basis $\{u,v\}$. There exist $a,b,c,d \in \mathbb{F}_q$ such that
		\begin{align*}
			\begin{cases}
				A_1 u = u\\
				A_1 v = au +bv
			\end{cases}
			\mbox{ and }\ \ \ \
			\begin{cases}
				A_2 u = c u +dv\\
				A_2 v = v
			\end{cases}.
		\end{align*}
		
		In the basis $\{u,v\}$, the matrices $A_1$ and $A_2$ become
		\begin{align*}
			\begin{bmatrix}
				1 & a\\
				0 & b
			\end{bmatrix}
			\mbox{ and }
			\begin{bmatrix}
				c & 0\\
				d & 1
			\end{bmatrix},
		\end{align*}
		respectively. Without loss of generality (see Lemma~\ref{lem:change-of-basis}), we identify $A_1$ and $A_2$ with these matrices. Since $\mathcal{B}(\mathcal{F})$ is intersecting, there exists $w = [w_1,w_2]^T \in \mathbb{F}_q^2 \setminus \{0\}$ such that $A_1w = A_2 w$. Therefore,
		\begin{align*}
			\begin{bmatrix}
			1 & a\\
			0 & b
			\end{bmatrix}
			\begin{bmatrix}
				w_1\\
				w_2
			\end{bmatrix}
			=
			\begin{bmatrix}
			c & 0\\
			d & 1
			\end{bmatrix}
			\begin{bmatrix}
				w_1\\
				w_2
			\end{bmatrix}
			.
		\end{align*}
		Equivalently, we have
		\begin{align*}
			\begin{bmatrix}
			c^{-1} & 0\\
			-dc^{-1} & 1 
			\end{bmatrix}
			\begin{bmatrix}
			1 & a\\
			0 & b
			\end{bmatrix}
			\begin{bmatrix}
			w_1\\
			w_2
			\end{bmatrix}
			&=
			\begin{bmatrix}
				c^{-1} & ac^{-1}\\
				-dc^{-1} & -dc^{-1}a +b
			\end{bmatrix}
			\begin{bmatrix}
				w_1\\
				w_2
			\end{bmatrix}
			=
			\begin{bmatrix}
			w_1\\
			w_2
			\end{bmatrix}
			.
		\end{align*}
		This implies that the matrix
		\begin{align*}
		\begin{bmatrix}
		1-c & a\\
		-d & cb-da-c
		\end{bmatrix},
		\end{align*}
		does not have full rank. In other words, its determinant is equal to $0$. Hence, we have $c( b-1 +ad +(1-b)c ) = 0$. From this, we get the following identity
		\begin{align}
			ad = (b-1)(c-1)\label{eq:first}.
		\end{align}
		
		\vspace*{0.3cm}
		\noindent{\bf Claim~1. If one of the matrices of $\mathcal{B}(\mathcal{F})$ is not diagonalizable, then $\mathcal{B}(\mathcal{F})$ fixes a line in $\mathcal{O}_1$.}
		
		\vspace*{0.3cm}
		
		If a matrix of $\GL{q}$ admitting an eigenvalue equal to $1$ is not diagonalizable, then the other eigenvalue must be equal to $1$, as well.
		Hence, $b=1$ or $c=1$ (see the expression of $A_1$ and $A_2$ in the basis $\{u,v\}$). By \eqref{eq:first}, we have $a=0$ or $d = 0$. Therefore, either $A_1v = bv$ or $A_2 u = cu$. In particular, if one of the matrices in $\mathcal{B}(\mathcal{F})$ belongs to $\SL{q}$, then $\mathcal{B}(\mathcal{F})$ has to fix a line.  
		Consequently, $\mathcal{B}(\mathcal{F})$ fixes a line in $\mathcal{O}_1$. 
		
		\vspace*{0.3cm}
		\noindent {\bf Claim~2. If both matrices of $\mathcal{B}(\mathcal{F})$ are diagonalizable, then it fixes a line in $\mathcal{O}_1$.}
		
		\vspace*{0.3cm}
		We assume henceforth that $A_1$ and $A_2$ are both diagonalizable. Then, there exist two pairs $(k,u^\prime),\ (t,v^\prime)\in \mathbb{F}_q\setminus \{0,1\} \times \Omega_q$ such that $A_1u^\prime = ku^\prime $ and $A_2 v^\prime = tv^\prime$. In other words, the eigenspaces of $A_1$ are $\ell_1$ and $\langle u^\prime\rangle$, and the ones for $A_2$ are $\ell_2$ and $\langle v^\prime\rangle$. Next, we prove that $\langle u^\prime \rangle = \langle v^\prime\rangle$.
		
		Let us write the vectors $u^\prime$ and $v^\prime$ in the basis $\{u,v\}$. There exist $\alpha,\beta,\gamma,\delta \in \mathbb{F}_q$ such that
		\begin{align*}
			\begin{cases}
				u^\prime &= \alpha u+ \beta v\\
				v^\prime &= \gamma u + \delta v.
			\end{cases}
		\end{align*}
		Note that $\beta \neq 0$ and $\gamma \neq 0$ since $\langle u \rangle \neq \langle u^\prime\rangle$ and $\langle v \rangle \neq \langle v^\prime\rangle$, respectively. As $k$ is an eigenvalue of $A_1$ with eigenvector $u^\prime = \alpha u+ \beta v$, we have
		\begin{align*}
			A_1u^\prime &= A(\alpha u +\beta v)
			= (\alpha+\beta a)u + \beta bv = k\alpha u + k\beta v.
		\end{align*}
		Similarly, for $A_2$ we have
		\begin{align*}
			A_2 v^\prime &= A_2(\gamma u + \delta v) = \gamma c u + (\gamma d + \delta) v = t\gamma u +t\delta v.
		\end{align*}
		Consequently, 
		\begin{align*}
			\begin{cases}
				\alpha +\beta a &= k\alpha\\
				\beta b &= k\beta
			\end{cases} \mbox{ and }\ \ \
			\begin{cases}
				\gamma c &= t \gamma\\
				\gamma d + \delta &= t \delta.%.
			\end{cases}
		\end{align*}
		As $\beta \neq 0$ and $\gamma \neq 0$, we conclude that $b = k$ and $c = t$. Furthermore, we have 
		\begin{align}
			a = \frac{(k-1)\alpha}{\beta}\ \ \  \mbox{ and }\ \ \ 
			d = \frac{(t-1)\delta}{\gamma}.
		\end{align}
		Applying these identities on $A_1$ and $A_2$, we have
		\begin{align}
			A_1 &= 
			\begin{bmatrix}
				1 & \frac{(k-1)\alpha}{\beta}\\
				0 & k
			\end{bmatrix} \ \ \ \mbox{ and }\ \ \ 
			A_2 = 
			\begin{bmatrix}
				t & 0\\
				\frac{(t-1)\delta}{\gamma} & 1
			\end{bmatrix}.\label{eq:final-form-matrices}
		\end{align} 
		Recall that the matrices $A_1$ and $A_2$ agree on $w = [w_1,\ w_2]^T$. Hence, $\left(A_2^{-1}A_1-I\right) w = 0$. That is, $A_2^{-1}A_1-I$ has determinant equal to $0$. We let the reader verify that
		\begin{align*}
			A_2^{-1}A_1-I &=
			\begin{bmatrix}
				1-t & \frac{(k-1)\alpha}{\beta}\\
				\frac{(1-t)\delta}{\gamma} & \frac{(k-1) \left( (1-t)\alpha \delta + \beta \gamma t  \right)}{\beta \gamma}  
			\end{bmatrix}.
		\end{align*}
		Since $\det ( A_2^{-1}A_1 - I) = 0$, we obtain
		\begin{align*}
			(1-t)(k-1) \left( (1-t)\alpha \delta + \beta \gamma t - \alpha \delta \right) = 0.
		\end{align*}
		Note that $t\neq 1$ and $k\neq 1$, since $A_1$ and $A_2$ are diagonalizable and not equal to the identity.  Hence, we can reduce the above equation to
		\begin{align}
			\beta \gamma = \alpha \delta.\label{eq:last-identity}
		\end{align}
		Using \eqref{eq:final-form-matrices}, it is not hard to verify that the eigenspaces of $A_1$ are the $1$-dimensional subspaces $\ell_1 = \langle u\rangle$ and $\langle u^\prime \rangle$, where
		\begin{align*}
			u^\prime = 
			\begin{bmatrix}
				\frac{\alpha}{\beta}\\
				1
			\end{bmatrix}.
		\end{align*}
		The eigenspaces of $A_2$ are $\ell_2 = \langle v\rangle$ and $\langle v^\prime\rangle$, where 
		\begin{align*}
				v^\prime &=
				\begin{bmatrix}
					1 \\
					\frac{\delta}{\gamma}
				\end{bmatrix}.
		\end{align*}
		Using \eqref{eq:last-identity}, it is easy to see that $u^\prime = \frac{\alpha}{\beta} v^\prime$, thus $\langle u^\prime\rangle = \langle v^\prime\rangle$.
		
		Consequently, the matrices $A_1$ and $A_2$ fix the line $\langle u^\prime\rangle = \langle v^\prime\rangle$. This completes the proof.
	\end{proof}

	\begin{lem}
		If $\mathcal{F} \subset \GL{q}$ fixes a line of $\mathcal{O}_1$, then it fixes a line in $\mathcal{O}_2$.\label{lem:fix-line-line}
	\end{lem}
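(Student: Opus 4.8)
The plan is to reduce to a normal form via Lemma~\ref{lem:change-of-basis} and then push everything through the intersecting condition, using the standing hypotheses of this section: $\mathcal{F}$ is intersecting, $I\in\mathcal{F}$, and $\mathcal{F}$ is not contained in any point stabilizer (equivalently, $\mathcal{F}$ fixes no element of $\Omega_q$), so that every element of $\mathcal{F}\setminus\{I\}$ has $1$ as an eigenvalue. Suppose $\mathcal{F}$ fixes the line $\ell=\langle u\rangle\in\mathcal{O}_1$. Extending $u$ to a basis $\{u,v\}$ of $\mathbb{F}_q^2$ and rewriting $\mathcal{F}$ in this basis keeps it intersecting by Lemma~\ref{lem:change-of-basis}; moreover a change of basis is a linear bijection of $\mathbb{F}_q^2$, so it permutes the lines of $\mathcal{O}_1$, permutes the lines of $\mathcal{O}_2$, and carries a line fixed setwise by $\mathcal{F}$ to a line fixed setwise by the rewritten set. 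Hence it suffices to prove the claim after this change of basis, so I may assume $\ell=\langle[1,0]^T\rangle$; then every $A\in\mathcal{F}$ is upper triangular, and I will write its diagonal entries as $\lambda_A,\mu_A\in\mathbb{F}_q^*$.

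Next I would pin down the diagonal entries. Since $\mathcal{F}$ is not contained in a point stabilizer it cannot fix $[1,0]^T$, so there is some $B\in\mathcal{F}$ with $\lambda_B\neq 1$; as $B\neq I$, it has eigenvalue $1$, which forces $\mu_B=1$. Now take an arbitrary $A\in\mathcal{F}$. Because $A$ and $B$ are intersecting, $A^{-1}B$ fixes a nonzero vector and hence has $1$ as an eigenvalue; but $A^{-1}B$ is upper triangular with diagonal entries $\lambda_A^{-1}\lambda_B$ and $\mu_A^{-1}$, so either $\lambda_A=\lambda_B$ or $\mu_A=1$. In the first case $\lambda_A=\lambda_B\neq 1$, so $A\neq I$ has eigenvalue $1$, which must then be $\mu_A$; in the second case we already have $\mu_A=1$. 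Either way $\mu_A=1$, so $\mathcal{F}$ is contained in the subgroup $M$ from the introduction.

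Finally, $M$ stabilizes the affine line $[0,1]^T+\langle[1,0]^T\rangle$ setwise, since a matrix of the form occurring in $M$ sends $[x,1]^T$ to $[kx+a,1]^T$, which ranges over that whole line as $x$ ranges over $\mathbb{F}_q$; and this line misses $0$, so it lies in $\mathcal{O}_2$. Undoing the change of basis then produces a line of $\mathcal{O}_2$ fixed by the original $\mathcal{F}$. I do not expect a genuine obstacle here; the one point requiring care is that the literal statement fails without the standing hypotheses (a group of diagonal matrices fixes two lines of $\mathcal{O}_1$ but no line of $\mathcal{O}_2$), and it is precisely the production of $B$ with $\lambda_B\neq 1$ together with the eigenvalue-$1$ constraint that uses ``$\mathcal{F}$ is non-canonical'' and drives the whole argument.
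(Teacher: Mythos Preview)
Your argument is correct and follows the same overall shape as the paper's: conjugate so that the fixed line is $\langle[1,0]^T\rangle$, show that $\mathcal{F}$ lands inside the subgroup $M$ of matrices $\begin{bmatrix} x & y\\ 0 & 1\end{bmatrix}$, and then check that $M$ stabilizes the affine line $[0,1]^T+\langle[1,0]^T\rangle\in\mathcal{O}_2$.

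The one substantive difference is that the paper simply \emph{asserts} the containment $\mathcal{F}\subset M$ immediately after the change of basis, whereas you supply an argument for it: produce $B$ with $\lambda_B\neq 1$ from non-canonicity, force $\mu_B=1$ from the eigenvalue-$1$ constraint, and then use the intersecting condition on $A^{-1}B$ to get $\mu_A=1$ for every $A$. Your observation that the bare statement is false without the standing hypotheses (e.g.\ for a subgroup of diagonal matrices) is exactly on point, and your extra paragraph is precisely what is needed to close that gap. So your proof is the same route as the paper's, but with the key step ``$\mathcal{F}\subset M$'' actually justified rather than taken for granted.
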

	\begin{proof}
		Assume that $\ell \in \mathcal{O}_1$ is fixed by all matrices of $\mathcal{F}$. Since $\mathcal{F}$ lies in the stabilizer of the line $\ell$, we may conjugate (i.e., change basis) $\mathcal{F}$ so that its elements are in the stabilizer of the line generated by $[1,0]^T$. So, we may assume that $\ell$ is the $1$-dimensional subspace generated by $[1,0]^T$. Therefore,
		\begin{align*}
			\mathcal{F} \subset 
			\left\{
			\begin{bmatrix}
				x & y \\
				0 &1
			\end{bmatrix} : x\in x\in \mathbb{F}_q^*, y\in \mathbb{F}_q \right\}.
		\end{align*}
		We claim that the elements of $\mathcal{F}$ fix the line $\ell^\prime := [0,1]^T + \ell  .$ An element $A\in \mathcal{F}$ fixes $\ell^\prime$ if and only if 
		\begin{align*}
			(A-I)
			\begin{bmatrix}
				0\\
				1
			\end{bmatrix}
			\in \ell.
		\end{align*}
		For any $x\in \mathbb{F}_q^*,\ y\in \mathbb{F}_q$ such that the matrix
		\begin{align*}
			A = 
			\begin{bmatrix}
				x & y\\
				0 & 1
			\end{bmatrix} \in \mathcal{F},
		\end{align*}
		we have
		\begin{align*}
			(A-I)
			\begin{bmatrix}
			0\\
			1
			\end{bmatrix}
			=
			\begin{bmatrix}
				x-1 & y\\
				0 & 0
			\end{bmatrix}
			\begin{bmatrix}
			0\\
			1
			\end{bmatrix}
			= 
			\begin{bmatrix}
			y\\
			0
			\end{bmatrix} \in \ell.
		\end{align*}
		Consequently, the elements of $\mathcal{F}$ fix the line $\ell^\prime$.
	\end{proof}

	Now, we are ready to prove Theorem~\ref{thm:main}.
	
	\begin{proof}[Proof of Theorem~\ref{thm:main}]
		Let $\mathcal{F}$ be a non-canonical intersecting set of $\GL{q}$. We assume that $I\in \mathcal{F}$. We will prove that the elements of $\mathcal{F}$ must fix a common line of $\mathcal{O}_1$. Then by Lemma~\ref{lem:fix-line-line}, we deduce that they also must fix a line in $\mathcal{O}_2$.
		
		Since $\mathcal{F}$ is a non-canonical intersecting set of $\GL{q}$, it contains a base $\mathcal{B}(\mathcal{F})$. By Theorem~\ref{thm:bases}, $\mathcal{B}(\mathcal{F})$ fixes a line in $\mathcal{O}_1$. Assume that $\mathcal{F}$ contains a matrix $A$ that is not diagonalizable. If $u \in \Omega_q$ is such that $Au = u$, then $\ell = \langle u\rangle$ is the unique line in $\mathcal{O}_1$ fixed by $A$. Then for every $B\in \mathcal{F} \setminus \{A\}$, either $\{A,B\}$ is a base of $\mathcal{F}$ or it is in a stabilizer of an element of $\Omega_q$. In both cases, $A$ and $B$ have a common fixed line. Hence, $B\ell = \ell$. We conclude that every element of $\mathcal{F}$ fixes the line $\ell \in \mathcal{O}_1$.
		
		Next, we assume that all matrices in $\mathcal{F}$ are diagonalizable. Let $\mathcal{F} = \left\{ A_1,A_2,\ldots,A_t \right\}\cup \{I\}$, for some positive integer $t$. For any $i\in \{1,2,\ldots,t\}$, the matrix $A_i$ has two distinct eigenvalues, one of which is equal to $1$. Moreover, we let $\ell_1^{(i)} \in \mathcal{O}_1$ and $\ell_2^{(i)} \in \mathcal{O}_1$ be the two eigenspaces corresponding to $1$ and the other eigenvalue, for any $i\in \{1,2,\ldots,t\}$ (i.e., $\ell_1^{(i)}$ is fixed pointwise). Consider the graph $K(\mathcal{F})$ defined as follows:
		\begin{itemize}
			\item the vertex set of $K(\mathcal{F})$ is the set of all pairs $\left\{ \ell_1^{(i)},\ell_2^{(i)} \right\}$, for any $i\in \{1,2,\ldots,t\}$;
			\item two vertices $\left\{ \ell_1^{(i)},\ell_2^{(i)} \right\}$ and $\left\{ \ell_1^{(j)},\ell_2^{(j)} \right\}$ are adjacent if and only if $$\left\{ \ell_1^{(i)},\ell_2^{(i)} \right\} \cap \left\{ \ell_1^{(j)},\ell_2^{(j)} \right\} = \varnothing.$$
		\end{itemize}
		We note that $K(\mathcal{F})$ is an induced subgraph of the graph $K(\mathcal{O}_1,2)$ whose vertex set is the $2$-subsets of $\mathcal{O}_1$ and where a pair of $2$-subsets of $\mathcal{O}_1$ are adjacent if they do not share a common line. It is straightforward that the graph $K(\mathcal{O}_1,2)$ is isomorphic to the Kneser graph $K(q+1,2)$. Recall that the maximum cocliques of $K(q+1,2)$ are the canonical intersecting sets given by the EKR theorem. Moreover, the size of the largest cocliques that are non-canonical are described by the Hilton-Milner theorem \cite{hilton1967some}. In particular, the maximum cocliques of $K(q+1,2)$ have size $q$ and the non-canonical cocliques are of size at most $3$ (see \eqref{eq:HM}).

		Since $\mathcal{F}$ is a non-canonical intersecting set, any two $A_i$ and $A_j$, for distinct $i, j \in \{1,2,\ldots,t\}$, either form a base or are contained in a stabilizer of an element of $\Omega_q.$ In both cases, they have a common fixed line belonging to $\mathcal{O}_1$ (see Theorem~\ref{thm:bases}). Therefore, for any distinct $i,j \in \{1,2,\ldots,t\}$, we have $$\left\{ \ell_1^{(i)},\ell_2^{(i)} \right\} \cap \left\{ \ell_1^{(j)},\ell_2^{(j)} \right\} \neq \varnothing.$$
		Consequently, the graph $K(\mathcal{F})$ is a coclique of $K(q+1,2)$. By the Hilton-Milner theorem, we know that if the vertices of $\mathcal{F}$ are not contained in any canonical intersecting set of $K(q+1,2)$, then $|\mathcal{F}| \leq 3$ (see \eqref{eq:HM}). If $|\mathcal{F}|\geq 4$, then $\mathcal{F}$ must be in a canonical intersecting set. That is, there exists $i\in \{1,2,\ldots,t\}$ such that $\mathcal{F}$ fixes the line $\ell_1^{(i)}$ or $\ell_2^{(i)}$. If $|\mathcal{F}|\leq 3$, then there exist $A_1,A_2$ such that $\mathcal{F} = \{I,A_1,A_2\}$. By Theorem~\ref{thm:bases}, $\mathcal{F}$ clearly fixes a line in $\mathcal{O}_1$ (when $A_1$ and $A_2$ form a base) or is in a stabilizer of an element of $\Omega_q$. The latter however cannot happen since $\mathcal{F}$ would be a canonical intersecting set. 
		
		We conclude that in all cases, an intersecting set of $\GL{q}$ which is non-canonical must fix a line in $\mathcal{O}_2$.
		
	\end{proof}
	
	\begin{proof}[Proof of Theorem~\ref{thm:main2}]
		Let $H\leq \GL{q}$ be a transitive subgroup and let $\mathcal{F} \subset H$ be intersecting. Given $u \in \Omega_q$ and a line $\ell \in \mathcal{O}_1 \cup \mathcal{O}_2$, we let $H_u$ and $H_\ell$ be the stabilizers of $u$ and $\ell$ in $H$, respectively. Assume that $I\in \mathcal{F}$. Since $\mathcal{F} \subset H \leq \GL{q}$, we know by Theorem~\ref{thm:main} that $\mathcal{F}$ is in a stabilizer of a point of $\Omega_q$ in $\GL{q}$ or it is contained in a stabilizer of a line of $\mathcal{O}_2$ in $\GL{q}$. That is, there exist $u \in \Omega_q$ or $\ell \in \mathcal{O}_2$ such that $\mathcal{F} \subset H_u$ or $\mathcal{F} \subset H_\ell$. 
		Consequently, if $H\leq \GL{q}$ is transitive and if $\mathcal{F} \subset H$ is intersecting, then 
		\begin{align*}
			|\mathcal{F}| \leq \max \left\{ |H_u|, |H_\ell| \right\}.
		\end{align*}
		It is obvious that $|H_u| = \frac{|H|}{|\Omega_q|}$. On the other hand, since $H$ is transitive on $\Omega_q$, $H$ is transitive on $\mathcal{O}_1$. Let $ a + \langle u \rangle$ and $ b + \langle v\rangle$ be two lines in $\mathcal{O}_2$. Then, $\{a,u\}$ and $\{b,v\}$ are both linearly independent. By transitivity of $H$ on $\Omega_q$, there exists $A\in H$ such that $Au = v$. Since $\{b,v\}$ is a basis of $\mathbb{F}_q^2$, there exist $\alpha,\beta \in \mathbb{F}_q$ such that $Aa = \alpha b + \beta v$. Note that $\alpha \neq 0$ since $a$ and $u$ are linearly independent (so their images by an invertible matrix are also linearly independent). Let $C = \alpha^{-1} A$. We have
		\begin{align*}
			\begin{cases}
				Cu &= \alpha^{-1}Au = \alpha^{-1}v,\\
				Ca &= \alpha^{-1} Aa = b + \alpha^{-1}\beta v.
			\end{cases}
		\end{align*}
		Consequently, $C\langle u\rangle = \langle v\rangle$ and $Ca -b \in \langle v \rangle $. In other words, $C(a+\langle u\rangle) = b+ \langle v\rangle$. Therefore, $H$ is transitive on the lines in $\mathcal{O}_2.$ In particular, $|H_\ell| = \frac{|H|}{|\mathcal{O}_2|} = \frac{|H|}{q^2-1}$.
		
		We conclude that if $\mathcal{F} \subset H$ is intersecting, then 
		\begin{align*}
		|\mathcal{F}| \leq \frac{|H|}{q^2-1}.
		\end{align*}
		In other words, $H$ has the EKR property.
	\end{proof}

\section{Future work}
In this paper, we proved that there is no Hilton-Milner theorem for the group $\GL{q}$ acting on $\Omega_q$, where $q$ is a prime power. In particular, we gave another proof that $\GL{q}$ acting on $\Omega_q$ has the EKR property and its maximum intersecting sets  are one of two families: the cosets of a point stablizer or the cosets of a stabilizer of a line in $\mathcal{O}_2$. A byproduct of our result in Theorem~\ref{thm:main} is that any transitive subgroup of $\GL{q}$ has the EKR property.

For any $n\geq 3$ and a prime power $q$, the group $\gl_n(\mathbb{F}_q)$ has the EKR property due to the existence of the Singer subgroups, which are regular subgroups. Since the coset of a stabilizer of a non-zero vector of $\mathbb{F}_q^n$ and the cosets of a stabilizer of a hyperplane of $\mathbb{F}_q^n$ are intersecting sets of maximum size, this action of $\gl_n(\mathbb{F}_q)$ does not have the strict-EKR property. It is therefore natural to ask whether our main result (Theorem~\ref{thm:main}) can be generalized to $\gl_n(\mathbb{F}_q)$ for $n\geq 3$. Using \verb|Sagemath|, we were able to verify that the answer to this question is negative for $\gl_3(\mathbb{F}_2)$.  

\begin{qst}
	Is there a Hilton-Milner type result for $\gl_n(\mathbb{F}_q)$ acting on the non-zero vectors of $\mathbb{F}_q^n$, for $n\geq 3$? %.
\end{qst}

\noindent{\sc Acknowledgement.} We would like to thank Karen Meagher for proofreading and helping us improve the presentation of this paper.

\bibliographystyle{plain}

\end{document}